\definecolor{mygreen}{rgb}{0,0.6,0}
\definecolor{mygray}{rgb}{0.5,0.5,0.5}
\definecolor{mymauve}{rgb}{0.58,0,0.82}
\definecolor{altblue}{rgb}{0.0,0.6,1.0}
\definecolor{lstbg}{cmyk}{0.05, 0.01, 0, 0}
\definecolor{morebluish}{cmyk}{0.06,0.04,0,0}
\begin{document}

\title{Series and Product Representations of Gamma, Pseudogamma and Inverse Gamma Functions}

\author{David Peter Hadrian Ulgenes}

\email{dpulgene@uio.no}
\orcid{0009-0002-6649-4210}

\affiliation{%
  \institution{University of Oslo}
  \streetaddress{Sem Sælands vei 24}
  \city{Oslo}
  \country{Norway}
  \postcode{0371}
}

\begin{teaserfigure}
 \centering
\includegraphics[width=10cm]{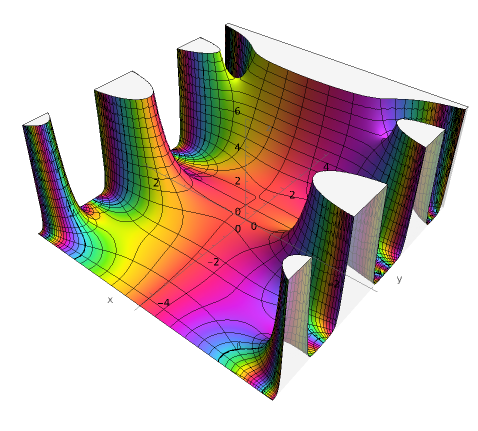}
\caption{Complex plot of the $\Lambda$ function. Code available in appendix.}
\end{teaserfigure}
\renewcommand{\shortauthors}{Ulgenes}

\begin{abstract}
\textbf{Abstract.} We derive product and series representations of the gamma function using Newton interpolation series. Using these identities, a new formula for the coefficients in the Taylor series of the reciprocal gamma function is found. We also find two new series representations for the Euler-Mascheroni constant, containing only rational terms. After that, we introduce a new pseudogamma function which we call the $\Lambda$ function. This function interpolates the factorial
at the positive integers, the reciprocal factorial at the negative integers, and is convergent for the entire real axis.
Finally, we conjecture a novel series representation for the principal branch of the inverse gamma function $\text{inv}\Gamma_0(z)$.

\end{abstract}

\begin{CCSXML}
<ccs2012>
   <concept>
       <concept_id>10002950.10003714.10003715.10003722</concept_id>
       <concept_desc>Mathematics of computing~Interpolation</concept_desc>
       <concept_significance>300</concept_significance>
       </concept>
 </ccs2012>
\end{CCSXML}

\ccsdesc[300]{Mathematics of computing~Interpolation}
\ccsdesc[500]{Mathematics of computing~Mathematical analysis}

\keywords{Gamma function, Series representation of gamma function, Taylor series of gamma function, Taylor series of reciprocal gamma function, Product representation of gamma function, Newton Interpolation, Pseudogamma function, Pseudo-gamma function, Euler-Mascheroni constant}

\maketitle

\section{Introduction}
The gamma function is the most widely used extension of the factorial function to the complex plane. The gamma function is defined for $x > 0$ by Euler’s second integral
$$\Gamma(x)=\int_{0}^{\infty}e^{-t}t^{x-1}dt,$$ 
from which the functional equation $$\Gamma(x+1)=x\Gamma(x)$$ follows by integrating by parts.

Consider the problem of finding an interpolating polynomial to compute the gamma function. Since $\Gamma(n)=(n-1)!$, it would be convenient to consider a series of the form 

\begin{equation}\label{eq:234}
\Gamma\left(x+1\right)=a_{0}+a_{1}x+a_{2}x\left(x-1\right)+a_{3}x\left(x-1\right)\left(x-2\right)+...
\end{equation}
where the coefficients $a_0, a_1, ...$ are iteratively computed by setting $x=0, 1, 2...$ and solving for $a_n$. Performing this calculation, we obtain the first few coefficients: $1,\ 0,\ \frac{1}{2},\ \frac{1}{3},\ \frac{3}{8}$, etc.

The Scottish mathematician James Stirling was the first to do this \cite{Historical}. However, the resulting series diverges everywhere except at the nonnegative integers, which Stirling himself pointed out. This is because the factorials grow too rapidly to be suitable as nodes for interpolation.

Later, in 1901, Hermite \cite{Hermite} considered the series 

\begin{equation}\label{eq:235}
\ln\Gamma\left(x+1\right)=a_{0}+a_{1}x+a_{2}x\left(x-1\right)+a_{3}x\left(x-1\right)\left(x-2\right)+...,
\end{equation}
where he was able to find the general expression for the coefficients $a_n=-\frac{1}{n!}\int_{0}^{\infty}\frac{e^{-t}}{t}\left(e^{-t}-1\right)^{n-1}dt$ for integer $n>1$. Hermite was also able to show that the series converges for complex $z$ with real component greater than 0. 

In this paper, we aim to take the ideas of Stirling and Hermite further by finding additional representations of the gamma function using interpolation series. We will make use of Newton's interpolation series formula

\begin{equation}\label{eq:1}
f(x)=\sum_{n=0}^{\infty}\binom{x}{n}\left(-1\right)^{n}\sum_{k=0}^{n}\left(-1\right)^{k}\binom{n}{k}f(k)
\end{equation}

(which equations \ref{eq:234} and \ref{eq:235} are special cases of) to find identities, which we will then show to converge. 
Using these identities, we find a new definition of the Taylor series of $1/\Gamma(x+1)$, as well as several rational series for the Euler-Mascheroni constant.

After that, we take Newton interpolation and the factorial a step further, by showing how Newton series can be used to define pseudogamma functions. A pseudogamma function is any function apart from $\Gamma$ which also interpolates the factorial. Unlike $\Gamma$, the pseudogamma function introduced in this paper (which we call $\Lambda$) converges for the entire real axis and has several interesting properties, for instance, the reflection formula $\Lambda(x)\Lambda(-x)=1$.

Finally, we propose how Newton series can be used to compute the principal branch of the inverse gamma function $\Gamma(y)=x$ (where we adopt the notation $\text{inv}\Gamma_0(z)$ from \cite{notation}). Despite a surge in research interest in recent years (see for instance \cite{1} \cite{2} \cite{3}), very little is known about the inverse gamma function.

\section{Main Results}

\begin{theorem}\label{theorem:6}
For $x>0$,\footnote{This equation has been mentioned before, for instance in \cite{Norlund}.}
\begin{equation}\label{eq:2}\frac{1}{\Gamma\left(x+1\right)}=\sum_{n=0}^{\infty}\left(-1\right)^{n}\binom{x}{n} L_{n}\left(1\right)\end{equation} 
where $L_{n}\left(1\right)$ is the Laguerre polynomial $L_n(x)=\sum_{k=0}^n \binom{n}{k}\frac{(-1)^k}{k!} x^k $ \cite{Laguerre} at $x=1$.

\end{theorem}
\begin{proof}
  Equation \ref{eq:2} follows from computing the Newton series of $1/\Gamma(x+1)$ and writing it using the definition of the Laguerre polynomial.
For convergence, we note that 
the generalized Laguerre Polynomial $L_{n}^{\left(\alpha\right)}\left(x\right)$ (satisfying $L_{n}^{\left(0\right)}\left(x\right)=L_{n}\left(x\right)$)
has the following bound \cite{Bound}
$$\left|L_{n}^{\left(\alpha\right)}\left(x\right)\right|\le\frac{\left(\alpha+1\right)_{n}}{n!}e^{\frac{x}{2}},\: \alpha\ge0,\: x\ge 0, \:n=0, 1, 2,...$$

where $\left(x\right)_{n}$ is the Pochhammer symbol.
Hence equation \ref{eq:2} converges where 
\begin{equation}\label{eq:9}
 \sum_{n=0}^{\infty}\left|\binom{x}{n}\right|
\end{equation}

does. But writing the summands in \ref{eq:9} as $\left|\frac{a_{n+1}}{a_{n}}\right|$ we obtain $\left|\frac{x-n}{n+1}\right|$ which for large n and positive $x$ gives $\frac{n-x}{n+1}$. Expanding this at $n=\infty$ gives $1-\frac{x+1}{n}+O\left(n^{-2}\right)$ hence equation \ref{eq:2} converges at least for $x>0$ by the Gauß test \cite{Gauss}.
\end{proof}

\begin{theorem}
The reciprocal gamma function can be expressed using the rational Taylor (Maclaurin) power series

\begin{equation}\label{eq:3}
\frac{1}{\Gamma\left(x+1\right)}=\sum_{k=0}^{\infty}a_{k}x^{k}
\end{equation}

\begin{equation}\label{eq:4}
=1+\gamma x+\frac{1}{12}\left(6\gamma^2-\pi^2\right)x^2+...
\end{equation}
where $$a_{k}=\sum_{n=0}^{\infty}\left(-1\right)^{n}s\left(n, k\right)\frac{L_{n}\left(1\right)}{n!},$$and $s\left(n, k\right)$ are the signed Stirling numbers of the first kind.

\end{theorem}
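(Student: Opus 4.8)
The plan is to turn the Newton series of Lemma~\ref{theorem:6} into a Maclaurin series by expanding each binomial coefficient in powers of $x$ and then collecting the coefficient of $x^k$. First I would use the defining identity of the signed Stirling numbers of the first kind,
$$\binom{x}{n}=\frac{1}{n!}\,x(x-1)\cdots(x-n+1)=\frac{1}{n!}\sum_{k=0}^{n}s(n,k)\,x^{k},$$
and substitute it into equation~\eqref{eq:2}. This gives the double sum
$$\frac{1}{\Gamma(x+1)}=\sum_{n=0}^{\infty}\sum_{k=0}^{n}(-1)^{n}\frac{L_{n}(1)}{n!}\,s(n,k)\,x^{k}.$$
Formally interchanging the order of summation, and noting that $s(n,k)=0$ for $n<k$ so the inner sum may be started at $n=0$, produces $\sum_{k=0}^{\infty}a_{k}x^{k}$ with $a_{k}=\sum_{n=0}^{\infty}(-1)^{n}s(n,k)L_{n}(1)/n!$, which is exactly the claimed formula.

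The real content of the proof is justifying that interchange. The Newton series converges only conditionally: Lemma~\ref{theorem:6} rested on the Gauß test, and indeed $\sum_{n}|\binom{x}{n}|$ diverges, so Fubini's theorem for series cannot be invoked on the strength of the crude Laguerre bound $|L_{n}(1)|\le e^{1/2}$ alone. Summing absolute values over $k$ first gives $\sum_{k=0}^{n}|s(n,k)|\,|x|^{k}=\Gamma(|x|+n)/\Gamma(|x|)$ (the rising factorial), and $\Gamma(|x|+n)/n!\sim n^{|x|-1}$, so the crude bound leaves the residual series $\sum_{n}n^{|x|-1}$, which diverges for every $x\neq 0$. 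The analytic input I would use instead is the sharp Szeg\H{o} asymptotic $L_{n}(1)=O(n^{-1/4})$: at a fixed positive argument the Laguerre polynomials decay with oscillation rather than remaining bounded away from zero. With this, the absolute double sum is dominated by $\sum_{n}n^{-1/4}\,n^{|x|-1}=\sum_{n}n^{|x|-5/4}$, which converges for $|x|<\tfrac14$.

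Hence, on the disk $|x|<\tfrac14$ the double series converges absolutely, Fubini's theorem for series legitimately permits the rearrangement, and we obtain $1/\Gamma(x+1)=\sum_{k=0}^{\infty}a_{k}x^{k}$ with the stated coefficients there. To extend this to a genuine statement about the Taylor series, I would appeal to uniqueness of power series: since $1/\Gamma(x+1)$ is entire, any power series equal to it on $|x|<\tfrac14$ must be its Maclaurin series, so the $a_{k}$ are the true Taylor coefficients and the representation~\eqref{eq:3} holds for all $x$. The main obstacle is precisely the convergence bookkeeping of the preceding paragraph, namely extracting enough decay from $L_{n}(1)$ to push the estimate past the borderline divergence and secure absolute convergence of the rearranged sum; the falling-factorial/Stirling expansion itself is routine.
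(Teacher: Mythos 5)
Your proposal follows the same algebraic skeleton as the paper --- substitute $\binom{x}{n}=\frac{1}{n!}\sum_{k}s(n,k)x^{k}$ into Lemma \ref{theorem:6} and interchange the two sums --- but you justify the interchange by a genuinely different and substantially stronger argument. The paper dismisses this step as ``permissible by the uniform convergence of equation \eqref{eq:2}'', which is thin: uniform convergence of the Newton series in $x$ does not by itself license reordering the double array over $(n,k)$ at a fixed $x$, and the lemma's convergence proof is carried out only for real $x>0$, so no Weierstrass double-series argument on a complex neighbourhood of $0$ is available either. You instead establish absolute convergence of the double sum: since $\sum_{k}|s(n,k)|\,|x|^{k}=|x|(|x|+1)\cdots(|x|+n-1)$ is the rising factorial, of order $n!\,n^{|x|-1}$, the crude bound $|L_{n}(1)|\le\sqrt{e}$ is indeed insufficient, and the Fej\'er--Szeg\H{o} decay $L_{n}(1)=O(n^{-1/4})$ is exactly what turns the majorant into $\sum_{n}n^{|x|-5/4}$, convergent for $|x|<\tfrac14$. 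Fubini then permits the rearrangement on that disk; by Lemma \ref{theorem:6} the $n$-first sum equals $1/\Gamma(x+1)$ at least on the real interval $(0,\tfrac14)$, and the identity theorem together with the entirety of $1/\Gamma$ extends the identity, and the identification of the $a_{k}$ as the true Maclaurin coefficients, to all $x$. What each approach buys: the paper's proof is one line; yours actually closes the logical gap and quantifies where the rearranged double sum converges absolutely.

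One local slip should be corrected: you assert that the Newton series ``converges only conditionally'' and that $\sum_{n}|\binom{x}{n}|$ diverges. In fact $|\binom{x}{n}|\sim C(x)\,n^{-(x+1)}$, so that sum converges for $x>0$ --- this is precisely what the Gau\ss{}-test computation in the proof of Lemma \ref{theorem:6} shows, and the paper reuses it in the proof of Theorem \ref{theorem:1} (``which we have already shown to converge for $x>0$''). The genuine obstruction, which your rising-factorial computation correctly identifies one sentence later, is that absolute convergence of the single sum over $n$ does not survive expanding $\binom{x}{n}$ into powers of $x$: the cancellations inside the falling factorial are destroyed, inflating $|\binom{x}{n}|\asymp n^{-x-1}$ to order $n^{|x|-1}$, so Fubini really does need the extra $n^{-1/4}$ decay of $L_{n}(1)$. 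The misstatement is therefore harmless to your logic, but the sentence should be fixed before the argument is recorded.
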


\begin{proof}

Substituting
$\binom{x}{n}=\frac{1}{n!}\sum_{k=0}^{\infty}s\left(n, k\right)x^{k}$ \cite{Stirling} in lemma \ref{theorem:6} and interchanging the order of summation (permissible by the uniform convergence of equation \ref{eq:2}, which was already demonstrated) yields the desired result. Here we use the convention that $s(n, k)=0$ for $k>n$.\end{proof}
\begin{corollary}
The equation $$f_m(x)=\sum_{k=0}^{\infty}x^{k}\sum_{n=0}^{m}\left(-1\right)^{n}s\left(n,\ k\right)\frac{L_{n}\left(1\right)}{n!}$$
for $m=1, 2, 3,\:...$ generates rational approximations of the gamma function. For instance, for $m=3, 4$ respectively,
\begin{equation}\label{eq:10}
\frac{1}{\Gamma\left(x+1\right)}\approx f_3(x)= 1+\frac{17}{36}x-\frac{7}{12}x^{2}+\frac{1}{9}x^{3},
\end{equation}
and 
\begin{equation}\label{eq:11}\frac{1}{\Gamma\left(x+1\right)}\approx f_4(x)=1+\frac{181}{288}x-\frac{167}{192}x^{2}+\frac{77}{288}x^{3}-\frac{5}{192}x^{4}.
\end{equation}
Computing the Taylor series of the reciprocals of equations \ref{eq:10}, \ref{eq:11} yields rational approximations to $\Gamma$.\footnote{The denominator of $x^n$ appears to equal $a^n$ (in the example, $a=36, 288$ respectively). Perhaps a closed form expression exists for these coefficients.} For instance, 
\begin{equation}\label{eq:555}
\Gamma(x+1)\approx \frac{1}{f_3(x)}=1-\frac{17}{36}x+\frac{1045}{1296}x^2-\frac{35801}{46656}x^3+\hdots,
\end{equation}
\begin{equation}\label{eq:556}
\Gamma(x+1)\approx \frac{1}{f_4(x)}=1-\frac{181}{288}x+\frac{104905}{82944}x^2-\frac{38432557}{23887872}x^3+\frac{15859708705}{6879707136}x^4+\hdots.
\end{equation}

\end{corollary}

\begin{figure}[htp]
    \centering
    \fbox{\includegraphics[width=5.5cm]{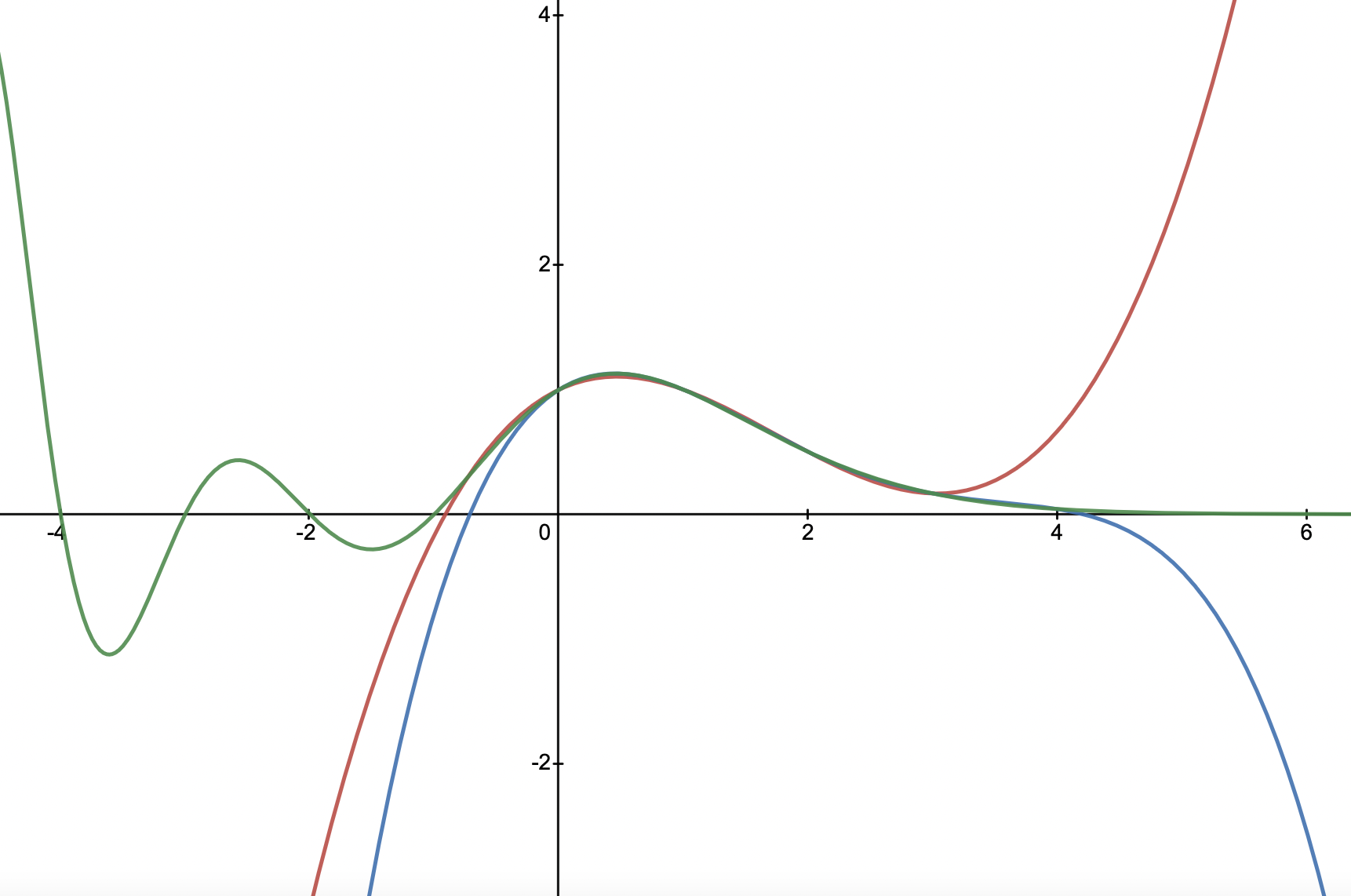}}
    \caption{$1/\Gamma(x+1)$ in green, with \ref{eq:10} and \ref{eq:11} shown in red and blue respectively.}
    \label{fig:Stirling's formula}
\end{figure}

\begin{corollary}
\begin{equation}-\gamma=\sum_{n=1}^{\infty}\frac{L_{n}\left(1\right)}{n}=-\frac{1}{4}-\frac{2}{9}-\frac{5}{32}+...\:.
\end{equation}

\end{corollary}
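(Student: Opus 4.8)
The plan is to obtain the corollary by reading off a single Taylor coefficient of $1/\Gamma(x+1)$ in two different ways. On one hand, the Maclaurin expansion recorded in equation \ref{eq:4} identifies the linear coefficient as $a_1=\gamma$. On the other hand, the preceding theorem supplies the formula $a_1=\sum_{n=0}^{\infty}(-1)^n s(n,1)\frac{L_n(1)}{n!}$. Equating these two expressions will yield the stated series once the Stirling numbers $s(n,1)$ are evaluated in closed form.

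First I would compute $s(n,1)$. Since $n!\binom{x}{n}=x(x-1)\cdots(x-n+1)=\sum_k s(n,k)x^k$, the number $s(n,1)$ is precisely the coefficient of the linear term of this product. For $n\ge 1$ a linear term arises only by selecting $x$ from the factor $(x-0)$ and the constant from each of the remaining factors, so $s(n,1)=(-1)(-2)\cdots(-(n-1))=(-1)^{n-1}(n-1)!$; moreover $s(0,1)=0$, which lets the sum begin at $n=1$. Substituting this into the formula for $a_1$, the sign factor collapses because $(-1)^n(-1)^{n-1}=-1$, and $\frac{(n-1)!}{n!}=\frac1n$, giving $\gamma=a_1=-\sum_{n=1}^{\infty}\frac{L_n(1)}{n}$, which rearranges to the claim. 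As a sanity check, $L_1(1)=0$, $L_2(1)=-\tfrac12$, $L_3(1)=-\tfrac23$, and $L_4(1)=-\tfrac58$ reproduce the displayed terms $-\tfrac14,\,-\tfrac29,\,-\tfrac{5}{32}$.

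The computation is short, and the only steps demanding care have already been handled upstream. The term-by-term extraction of $a_1$ from the Newton series is licensed by the uniform convergence established in Lemma \ref{theorem:6} and invoked in the theorem, while the identification $a_1=\gamma$ rests on the standard expansion of the reciprocal gamma function. Consequently, the main obstacle is not internal to this corollary at all: it lives in the convergence and rearrangement machinery assumed from the earlier results. Here one need only verify the closed form $s(n,1)=(-1)^{n-1}(n-1)!$ and keep the signs straight, after which the identity falls out immediately.
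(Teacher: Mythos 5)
Your proposal is correct and takes essentially the same route as the paper: comparing the coefficient of $x$ in equations \ref{eq:3} and \ref{eq:4} and using $s(n,1)=(-1)^{n-1}(n-1)!$ together with $s(0,1)=0$ to collapse the signs into $-\frac{1}{n}$. The only difference is cosmetic — you derive the Stirling number identity from the falling factorial directly, whereas the paper cites it — and your numerical check of $L_2(1)=-\tfrac12$, $L_3(1)=-\tfrac23$, $L_4(1)=-\tfrac58$ against the displayed terms is accurate.
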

\begin{proof}
This equation follows from comparing coefficients in equations \ref{eq:3} and \ref{eq:4} and using the identities $s(n, 1)=(-1)^{n-1}(n-1)!$ (also available in \cite{Stirling}) and $s(0, 1)=0$.
\end{proof}

\begin{corollary}
For $x>0 \notin \mathbf{N}$ the reciprocal gamma function can be written as a limit involving the hypergeometric function:\footnote{This is a partial solution to problem 8.6 in \cite{Maple} which asks for what $x$ can the gamma function be written in terms of the hypergeometric function.}
$$\frac{1}{\Gamma\left(x+1\right)}=\lim_{n\to\infty}\left(-1\right)^{n}\binom{x-1}{n}\ _{2}F_{2}\left(-x,\ -n;\ 1,\ 1-x;\ 1\right).$$

\end{corollary}
\begin{proof}
Using induction it is trivial to show that Newton's formula \ref{eq:1} can be written as 
  $$\sum_{n=0}^{N}\binom{x}{n}\left(-1\right)^{n}\sum_{k=0}^{n}\left(-1\right)^{k}\binom{n}{k}f(k)=\sum_{i=0}^{N}f\left(i\right)\sum_{n=i}^{N}\left(-1\right)^{n+i}\binom{x}{n}\binom{n}{i}.$$

Substituting $f(i)=1/i!$ in the formula on the right we obtain, with the help of Maple 

$$\frac{1}{\Gamma(x+1)}=\lim_{N\to\infty}x\binom{x-1}{N}\sum_{i=0}^{N}\frac{\left(-1\right)^{N+i}}{i!(x-i)}\binom{N}{i}$$

which converges for $x>0$ except at the integers.\footnote{The limit has the same value as we have shown that equation \ref{eq:2} converges absolutely.} Rewriting the summation using Maple we obtain the desired equation.
  
\end{proof}

\begin{theorem}\label{theorem:4}
For real values of $x\ge1$ the gamma function has the following Newton series representation

\begin{equation}\label{eq:6}
\Gamma(x)=x^{x-1}\sum_{n=1}^{\infty}\left(-1\right)^{n}\binom{x}{n}\sum_{k=1}^{n}\left(-1\right)^{k}\frac{k!}{k^k}\binom{n}{k}.
\end{equation}

For instance, 
$$\Gamma\left(\frac{3}{2}\right)=\frac{\sqrt{\pi}}{2}=\sqrt{\frac{3}{2}}\left(\frac{3}{2}-\frac{9}{16}-\frac{31}{288}-\frac{517}{12288}+...\right).$$
\end{theorem}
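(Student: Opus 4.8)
The plan is to recognize the right-hand side of \eqref{eq:6}, after division by $x^{x-1}$, as the Newton series \eqref{eq:1} of the function
$$G(x):=\frac{\Gamma(x)}{x^{x-1}}=\frac{\Gamma(x+1)}{x^{x}},$$
whose values at the positive integers are exactly $G(k)=k!/k^{k}$, the numbers appearing in the inner sum. First I would dispose of the bookkeeping at $k=0$: the series as written omits the $k=0$ term, i.e. it uses the value $0$ in place of the true $G(0)=\lim_{x\to0^{+}}\Gamma(x)/x^{x-1}=1$. This is harmless for $x>0$, because replacing $G(0)$ by any other value alters the Newton series only by a multiple of $\sum_{n\ge0}\binom{x}{n}(-1)^{n}=(1-1)^{x}$, the binomial series at $t=-1$, which converges to $0$ precisely when $\operatorname{Re}(x)>0$. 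Hence on the range $x\ge1$ claimed in the theorem the stated series and the genuine Newton series of $G$ have the same sum, and it suffices to prove that the latter converges to $G$.

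Next I would prove convergence, keeping in mind the cautionary example of Stirling's series \eqref{eq:234} from the introduction: a Newton series built from factorial-sized data need not converge at all, so this step is genuine work. Here the data are tame. By Stirling's formula,
$$\ln G(z)=\tfrac12\ln z-z+\tfrac12\ln(2\pi)+O(z^{-1}),\qquad |\arg z|<\pi,$$
so $G(z)=\sqrt{2\pi z}\,e^{-z}\bigl(1+O(z^{-1})\bigr)$; in particular $G$ is holomorphic and bounded on every half-plane $\operatorname{Re}(z)\ge\delta>0$ and decays there. From such a bound the finite differences $\Delta^{n}G(0)$ stay controlled (one can write $\Delta^{n}G(0)$ as a contour integral of $G$ against $n!\big/\bigl(z(z-1)\cdots(z-n)\bigr)$ and estimate it using the decay of $G$). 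Combined with the estimate $\bigl|\binom{x}{n}\bigr|=O\!\left(n^{-x-1}\right)$ --- exactly the Gau\ss-test computation already carried out in the proof of Lemma~\ref{theorem:6} --- this yields absolute convergence of the Newton series of $G$ for every real $x>0$, and uniform convergence on compact subsets.

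The substantive step is to show that the series actually sums to $G$, not merely to \emph{some} function interpolating the values $k!/k^{k}$. For this I would invoke a Carlson/N\"orlund-type uniqueness theorem \cite{Norlund}: a function holomorphic in the right half-plane, of exponential type strictly less than $\pi$ along vertical lines and suitably controlled on the real axis, is represented by its Newton series there. The required growth is exactly what Stirling supplies: along a vertical line $z=\sigma+it$ the factor $e^{-z}$ contributes only the bounded modulus $e^{-\sigma}$, so $|G(\sigma+it)|\sim\sqrt{2\pi}\,|t|^{1/2}e^{-\sigma}$ as $|t|\to\infty$ --- polynomial growth, hence exponential type $0<\pi$ (note that the exponential factors of $\Gamma(z)$ and of $z^{z-1}$ cancel). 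Since $\operatorname{Re}(z)>0$ keeps us inside the sector $|\arg z|<\pi/2$ where Stirling's expansion is uniformly valid, the hypotheses hold throughout the half-plane, the theorem identifies the sum with $G(z)$ for $\operatorname{Re}(z)>0$, and in particular for real $x\ge1$. Multiplying by $x^{x-1}$ then gives \eqref{eq:6}.

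I expect the main obstacle to be the rigorous verification of the Carlson/N\"orlund hypotheses uniformly across the whole half-plane, rather than any formal manipulation. Two points need care: controlling $\Gamma(z)/z^{z-1}$ --- in particular the branch of $z^{z-1}=e^{(z-1)\log z}$ --- uniformly as $|\operatorname{Im}z|\to\infty$ down to lines arbitrarily close to the real axis, where the $O(z^{-1})$ error in Stirling must be shown not to spoil the sub-$\pi$ type bound; and pinning down the exact abscissa of convergence so that the region of convergence and the region of representation jointly cover the stated range $x\ge1$. Once the growth of $G$ is established cleanly, both the convergence estimate and the identification step follow, and the single numerical check $\Gamma(3/2)=\sqrt{3/2}\,\bigl(\tfrac32-\tfrac{9}{16}-\cdots\bigr)$ serves as a useful consistency test of the coefficient arithmetic.
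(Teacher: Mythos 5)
Your proposal is correct in outline but takes a genuinely different route from the paper. The paper, after the same recognition that the series is the Newton series of $x^{-x}\Gamma(x+1)$, argues convergence by elementary integral means: it writes $k!/k^{k}=k\int_{0}^{\infty}(te^{-t})^{k}\,dt$, evaluates the inner binomial sum in closed form under the integral to get $\Gamma(x)=-x^{x-1}\sum_{n\ge1}\int_{0}^{\infty}\binom{x}{n}\,n\,(e^{-t}t-1)^{n}/(e^{t}-t)\,dt$, bounds $|1-te^{-t}|^{n}\le 1$ so that everything reduces to $C\,x^{x-1}\sum_{n}n\left|\binom{x}{n}\right|$ with $C=\int_{0}^{\infty}dt/(e^{t}-t)\approx 1.359$, and finishes with the same Gau\ss-test estimate as in Lemma \ref{theorem:6}; the extra factor of $n$ is why this bound only yields $x>1$, and the endpoint $x=1$ is checked by direct evaluation (the series terminates at positive integers). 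Your Stirling-plus-N\"orlund/Carlson route buys two things the paper leaves implicit: the identification step --- the paper proves convergence but never argues that the sum equals $\Gamma$ rather than some other interpolant, which you correctly single out as the substantive issue --- and the $k=0$ bookkeeping, namely that dropping the omitted terms subtracts only $f(0)\sum_{n\ge0}(-1)^{n}\binom{x}{n}=(1-1)^{x}=0$ for $x>0$, a point the paper glosses over. What the paper's method buys is elementarity and explicit constants, with no appeal to complex-function theory. Two small corrections to your write-up: $G$ is \emph{not} bounded and decaying on half-planes $\operatorname{Re}z\ge\delta$ --- as you yourself note later, $|G(\sigma+it)|\sim\sqrt{2\pi}\,|t|^{1/2}e^{-\sigma}$ grows along vertical lines --- though polynomial growth of order $1/2$ sits far below the type-$\ln 2$ threshold of N\"orlund's representation theorem, so nothing breaks; and once you invoke that representation theorem (holomorphy plus $O(e^{c|z|})$ growth with $c<\ln 2$ in a half-plane gives convergence of the Newton series \emph{to the function}), your separate contour-integral estimates on $\Delta^{n}G(0)$ and the application of Carlson's uniqueness theorem to the series sum --- whose vertical growth you never verify --- become redundant and can be deleted.
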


\begin{proof}
Equation \ref{eq:6} is obtained by computing the Newton series of $x^{-x}\Gamma(x+1)$ and multiplying the resulting series by $x^{x-1}$.
For convergence we note that $x^{-x}=\Gamma(x)^{-1}\int_{0}^{\infty}e^{-xt}t^{x-1}dt$ which follows from substituting $u=kt$. Substituting this into equation \ref{eq:6} we obtain, after letting Maple (2021.2) compute the inner sum

$$\Gamma(x)=-x^{x-1}\sum_{n=1}^{\infty}\int_{0}^{\infty}\binom{x}{n}n\frac{\left(e^{-t}t-1\right)^{n}}{e^{t}-t}dt.$$
Taking the absolute values of the summands, one sees that the resulting series would satisfy
\begin{equation}\label{eq:7}
x^{x-1}\sum_{n=1}^{\infty}\int_{0}^{\infty}\left|\binom{x}{n}n\frac{\left(1-e^{-t}t\right)^{n}}{e^{t}-t}\right|dt\le x^{x-1}\sum_{n=1}^{\infty}\int_{0}^{\infty}\left|\binom{x}{n}\frac{n}{e^{t}-t}\right|dt=Cx^{x-1}\sum_{n=1}^{\infty}\left|\binom{x}{n}n\right|,
\end{equation}

where $C=1.35909...\:$. Re-indexed, the last sum can be written, for $x>0$, as $Cx^{x}\sum_{n=0}^{\infty}\left|\binom{x-1}{n}\right|$.
Hence equation \ref{eq:6} converges uniformly at least for real values of $x>1$, which follows directly from the proof of lemma \ref{theorem:6}. But as  $\Gamma(1)=\sum_{n=1}^{\infty}\left(-1\right)^{n}\binom{1}{n}\sum_{k=1}^{n}\left(-1\right)^{k}\frac{k!}{k^k}\binom{n}{k}=1$ the series converges for $x\ge 1$. 
\end{proof}
\begin{corollary}\label{corollary1}
The Euler-Mascheroni constant $\gamma$ has the following series representation containing only rational terms
$$\gamma=-1+\sum_{n=2}^{\infty}\left(n-2\right)!\sum_{k=1}^{n}\frac{\left(-1\right)^{1+k}}{\left(n-k\right)!k^{k}},$$

$$\gamma=-1+\frac{3}{4}+\frac{31}{108}+\frac{517}{3456}+\frac{322537}{3600000}+...  $$
(sequences A360092\cite{OEIS1} and A360091\cite{OEIS2} in the OEIS).
\end{corollary}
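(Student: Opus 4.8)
The plan is to extract $\gamma$ from Theorem \ref{theorem:4} by differentiation, using the classical identity $\Gamma'(1) = \psi(1) = -\gamma$. Writing the theorem as $\Gamma(x) = x^{x-1} S(x)$ with $S(x) = \sum_{n=1}^{\infty}(-1)^n \binom{x}{n} c_n$ and $c_n = \sum_{k=1}^n (-1)^k \frac{k!}{k^k}\binom{n}{k}$, I would differentiate both sides and evaluate at $x=1$. Because $\frac{d}{dx} x^{x-1} = x^{x-1}(\ln x + \tfrac{x-1}{x})$ vanishes at $x=1$, the product rule leaves only $\Gamma'(1) = S'(1)$, so the whole problem reduces to computing $S'(1)$ termwise.

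The central computation is $\frac{d}{dx}\binom{x}{n}\big|_{x=1}$. Since $\binom{x}{n} = \frac{1}{n!}\prod_{i=0}^{n-1}(x-i)$ contains the factor $(x-1)$ for every $n \geq 2$, at $x=1$ all terms of the product-rule derivative die except the one in which $(x-1)$ itself is differentiated; evaluating the surviving product $\prod_{i \neq 1}(1-i)$ gives $\frac{d}{dx}\binom{x}{n}\big|_{x=1} = \frac{(-1)^n (n-2)!}{n!}$ for $n\ge 2$, while the $n=1$ term contributes $-c_1 = 1$. Substituting and noting $(-1)^n \cdot (-1)^n = 1$ collapses $S'(1)$ to $1 + \sum_{n=2}^\infty \frac{(n-2)!}{n!} c_n$. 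Finally, expanding $\binom{n}{k} = \frac{n!}{k!(n-k)!}$ in $c_n$ cancels both the $n!$ and the $k!$, turns $\frac{(n-2)!}{n!}c_n$ into $(n-2)!\sum_{k=1}^n \frac{(-1)^k}{k^k(n-k)!}$, and the overall sign flip from $\gamma = -\Gamma'(1)$ produces the $(-1)^{k+1}$ in the claimed formula.

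The main obstacle is analytic rather than algebraic: justifying the term-by-term differentiation of $S$ at the boundary point $x=1$ of the region of convergence established in Theorem \ref{theorem:4}. I would address this by showing that the formally differentiated series $\sum_{n}(-1)^n \frac{d}{dx}\big[x^{x-1}\binom{x}{n}\big] c_n$ converges uniformly on a one-sided neighborhood $[1, 1+\delta]$ (the standard hypothesis under which $\big(\sum f_n\big)' = \sum f_n'$), reusing the Laguerre/Pochhammer bounds and the Gauss-test estimate on $\sum|\binom{x}{n}|$ from the proofs of Lemma \ref{theorem:6} and Theorem \ref{theorem:4}, now applied to $\frac{d}{dx}\binom{x}{n}$. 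Once uniform convergence of the derivative series is secured, the interchange is legitimate and the identity follows; checking that the $n=2,3$ partial terms reproduce $\frac34$ and $\frac{31}{108}$ provides a useful numerical confirmation.
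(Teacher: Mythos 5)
Your proposal is correct and takes essentially the same route as the paper: both differentiate the series of Theorem \ref{theorem:4} termwise, evaluate at $x=1$ via $\Gamma'(1)=\psi(1)=-\gamma$, and reduce everything to the derivative of $\binom{x}{n}$ at $x=1$, where your ``only the $(x-1)$-factor's derivative survives'' computation gives $(-1)^n(n-2)!/n!$, which is exactly what the paper's logarithmic-differentiation formula collapses to after its gamma-ratio terms vanish at $x=1$. Your bookkeeping is marginally cleaner (noting $\frac{d}{dx}x^{x-1}$ vanishes at $x=1$ instead of dividing by $x^{x-1}$ first), and you are in fact more candid than the paper about the real analytic gap, namely that termwise differentiation requires uniform convergence of the \emph{differentiated} series near $x=1$, which the paper waves at and you at least name as the hypothesis to verify.
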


\begin{proof}
We begin by differentiating equation \ref{eq:6} after dividing by $x^{x-1}$. So

\begin{equation}\label{eq:22}
\frac{d}{dx} \frac{\Gamma\left(x\right)}{x^{x-1}}=-\frac{\Gamma\left(x\right)}{x^{x}}\left(x+x\ln\left(x\right)-x\psi\left(x\right)-1\right)=\sum_{n=1}^{\infty}\left(-1\right)^{n}\frac{d}{dx} \binom{x}{n}\sum_{k=1}^{n}\left(-1\right)^{k}\frac{k!}{k^k}\binom{n}{k}, 
\end{equation}

where $\psi\left(x\right)=\Gamma^{\prime}\left(x\right)/\Gamma\left(x\right)$ is the digamma function. We are allowed to interchange the sum and the derivative due to the convergence proof in theorem \ref{theorem:4}. To differentiate $\binom{x}{n}$ (for integer \emph{n}) we write

$$
\frac{d}{dx}\binom{x}{n}=\binom{x}{n}\sum_{i=0}^{n-1}\frac{1}{x-i}
$$

which holds by logarithmic differentiation on $\binom{x}{n}$. This can be rewritten as 

\begin{equation}\label{eq:8}
\sum_{i=0}^{n-1}\frac{1}{n!}\left(\prod_{m=0}^{i-1}\left(x-m\right)\right)\prod_{m=i+1}^{n-1}\left(x-m\right)=\sum_{i=0}^{n-1}\frac{\left(-1\right)^{n+i+1}}{n!}\frac{\Gamma\left(x+1\right)}{\Gamma\left(x-i+1\right)}\frac{\Gamma\left(n-x\right)}{\Gamma\left(i-x+1\right)}.
\end{equation}

Extracting the first term in equation \ref{eq:22} and rewriting it using equation \ref{eq:8}  gives

$$
\frac{d}{dx} \frac{\Gamma\left(x\right)}{x^{x-1}}=1+\sum_{n=2}^{\infty}\sum_{k=1}^{n}\frac{\left(-1\right)^{k}}{\left(n-k\right)!k^{k}}\sum_{i=0}^{n-1}\frac{\left(-1\right)^{i-1}\Gamma(x+1)}{\Gamma(x-i+1)}\frac{\Gamma\left(n-x\right)}{\Gamma\left(i-x+1\right)}.
$$

Since $\psi\left(1\right)=-\gamma$, we obtain, at $x=1$:

$$-\gamma=1+\sum_{n=2}^{\infty}\sum_{k=1}^{n}\frac{\left(-1\right)^{k}}{\left(n-k\right)!k^{k}}\sum_{i=0}^{n-1}\frac{\left(-1\right)^{i-1}}{\Gamma\left(2-i\right)}\frac{\Gamma\left(n-1\right)}{\Gamma\left(i\right)}=1+\sum_{n=2}^{\infty}\left(n-2\right)!\sum_{k=1}^{n}\frac{\left(-1\right)^{k}}{\left(n-k\right)!k^{k}}.$$\end{proof}

\begin{theorem}\label{theorem:1}
The gamma function has the following product representation which converges for positive real values of $x$:

\begin{equation}\label{eq:5}
\Gamma(x)  = \frac{1}{x}\prod_{n=1}^{\infty}\prod_{k=1}^{n} k!^{\left(-1\right)^{k+n}\binom{x}{n}\binom{n}{k}}= \frac{1}{x}\left(\frac{2}{1}\right)^{\binom{x}{2}}\left(\frac{3}{4}\right)^{\binom{x}{3}}\left(\frac{32}{27}\right)^{\binom{x}{4}}...\: . 
\end{equation}

\end{theorem}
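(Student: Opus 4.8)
The plan is to take logarithms and recognize that the resulting series is nothing other than the Newton interpolation series of $\ln\Gamma(x+1)$, i.e. the series of Hermite discussed in the introduction. Writing the right-hand side without the prefactor as $P(x)=\prod_{n=1}^{\infty}\prod_{k=1}^{n} k!^{(-1)^{k+n}\binom{x}{n}\binom{n}{k}}$ and taking $\ln P(x)$ converts the double product into the double sum $\sum_{n=1}^{\infty}\sum_{k=1}^{n}(-1)^{k+n}\binom{x}{n}\binom{n}{k}\ln(k!)$. Since $(-1)^{k+n}=(-1)^{n}(-1)^{k}$ and $\ln(k!)=\ln\Gamma(k+1)$, this is exactly equation \ref{eq:1} applied to $f(x)=\ln\Gamma(x+1)$, so the entire content of the theorem reduces to the validity of that Newton series together with its convergence.

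First I would verify that the apparently missing $n=0$ and $k=0$ terms are harmless: the $k=0$ term carries the factor $\ln(0!)=0$, so the inner sum may start at $k=1$, and the $n=0$ term of \ref{eq:1} equals $f(0)=\ln\Gamma(1)=0$ for the same reason, so the outer sum may start at $n=1$. Thus equation \ref{eq:1} gives
\[
\ln\Gamma(x+1)=\sum_{n=1}^{\infty}\binom{x}{n}(-1)^{n}\sum_{k=1}^{n}(-1)^{k}\binom{n}{k}\ln(k!)=\sum_{n=1}^{\infty}\sum_{k=1}^{n}(-1)^{k+n}\binom{x}{n}\binom{n}{k}\ln(k!).
\]
Exponentiating both sides realizes $\Gamma(x+1)$ as the double product $P(x)$, and inserting the functional equation $\Gamma(x+1)=x\Gamma(x)$ produces the claimed prefactor $1/x$. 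A quick sanity check on the grouped-by-$n$ sub-products $\bigl(\tfrac{2}{1}\bigr)^{\binom{x}{2}},\ \bigl(\tfrac{3}{4}\bigr)^{\binom{x}{3}},\ \bigl(\tfrac{32}{27}\bigr)^{\binom{x}{4}}$ confirms that the internal cancellations of the $k!$ powers match the stated constants.

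The main obstacle is convergence, and this is precisely where I would invoke Hermite. Grouping the product by $n$, its $n$-th factor $q_{n}(x)$ satisfies $\ln q_{n}(x)=\binom{x}{n}\,c_{n}$ with $c_{n}=\sum_{k=1}^{n}(-1)^{k+n}\binom{n}{k}\ln(k!)$ the $n$-th forward difference of $\ln\Gamma(\cdot+1)$ at $0$; hence $\sum_{n}\ln q_{n}(x)$ is the series $\ln\Gamma(x+1)=a_0+a_1x+a_2x(x-1)+\cdots$ of the introduction, using $x(x-1)\cdots(x-n+1)=n!\binom{x}{n}$. Hermite \cite{Hermite} proved this series converges for all arguments with positive real part, in particular for real $x>0$, and since convergence of a sum of logarithms to a finite value is equivalent to convergence of the associated product to a nonzero limit, the product representation converges for $x>0$ as claimed. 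The delicate point, which is exactly what Hermite's analysis resolves, is that $\binom{x}{n}$ alone decays only like $n^{-x-1}$ and cannot absorb unbounded growth of the difference coefficients $c_{n}$; a self-contained alternative in the style of Lemma \ref{theorem:6} and Theorem \ref{theorem:4} would require combining Hermite's integral representation of $c_{n}$ (equivalently of the $a_{n}$) with the convergence of $\sum_{n}\bigl|\binom{x}{n}\bigr|$ for $x>0$ established in Lemma \ref{theorem:6}, but deferring to Hermite's proven convergence is the cleanest way to close this step.
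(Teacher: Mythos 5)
Your proposal is correct and follows essentially the same route as the paper: compute the Newton series of $\ln\Gamma(x+1)$ via equation \ref{eq:1}, exponentiate, and use $\Gamma(x+1)=x\Gamma(x)$ to produce the prefactor $1/x$. For convergence, the paper carries out precisely the ``self-contained alternative'' you sketch at the end --- it derives Hermite's integral representation of the coefficients from $\ln\Gamma(x+1)=\int_{0}^{\infty}\left(e^{-xt}-xe^{-t}-1+x\right)/\left(t\left(e^{t}-1\right)\right)dt$, bounds the integrand by $e^{-t}$, and reduces everything to the convergence of $\sum_{n}\left|\binom{x}{n}\right|$ for $x>0$ established in Lemma \ref{theorem:6} --- so your deferral to Hermite's cited convergence theorem differs only cosmetically from the paper's argument.
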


\begin{proof}
Computing the Newton series of $\ln\Gamma(x+1)$ using equation \ref{eq:1} and exponentiating the result gives the desired formula. 
For convergence we rewrite \ref{eq:5} using the integral 
$\ln \Gamma(x+1)= \int_{0}^{\infty}\left(e^{-xt}-xe^{-t}-1+x\right)/t\left(e^{t}-1\right)dt$. After using Maple to evaluate the inner sum,

$$\ln\Gamma(x+1)=-\sum_{n=2}^{\infty}\int_{0}^{\infty}\binom{x}{n}\frac{e^{-t}}{t}\left(e^{-t}-1\right)^{n-1}dt$$
which Hermite obtained in \cite{Hermite}. For integer n and $t>0$, 
$\frac{e^{-t}}{t}\left(e^{-t}-1\right)^{n-1}\le\ e^{-t}$ and so 
$$\sum_{n=2}^{\infty}\int_{0}^{\infty}\left|\binom{x}{n}\frac{e^{-t}}{t}\left(e^{-t}-1\right)^{n-1}\right|dt\le\sum_{n=2}^{\infty}\int_{0}^{\infty}\left|\binom{x}{n}e^{-t}\right|dt=\sum_{n=2}^{\infty}\left|\binom{x}{n}\right|$$
which we have already shown to converge for $x>0$.
\end{proof}\begin{theorem}
The digamma function has the Newton series
   $$\psi(x+1)=-\gamma-\sum_{k=1}^\infty \frac{(-1)^k}{k} \binom{x}{k}$$
which is commonly called the Stern series.
\end{theorem}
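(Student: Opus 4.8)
The plan is to realize $\psi(x+1)$ as the sum of its own Newton series, exactly as in the preceding theorems: apply formula \ref{eq:1} to $f(x)=\psi(x+1)$ and identify the resulting coefficients. Since $f(k)=\psi(k+1)=-\gamma+H_k$, where $H_k=\sum_{j=1}^{k}1/j$ is the $k$-th harmonic number, the $n=0$ term is $f(0)=\psi(1)=-\gamma$, while for $n\ge1$ the $n$-th Newton coefficient is $\Delta^n f(0)=(-1)^n\sum_{k=0}^{n}(-1)^k\binom{n}{k}\psi(k+1)$. I would expect this to collapse to $(-1)^{n-1}/n$, so that the series reads $-\gamma+\sum_{n\ge1}\frac{(-1)^{n-1}}{n}\binom{x}{n}$, which is the asserted identity.

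First I would dispose of the constant: because $\sum_{k=0}^{n}(-1)^k\binom{n}{k}=(1-1)^n=0$ for $n\ge1$, the $-\gamma$ term contributes nothing and the coefficient reduces to $(-1)^n\sum_{k=0}^{n}(-1)^k\binom{n}{k}H_k$. To evaluate this alternating binomial--harmonic sum cleanly I would substitute the integral representation $H_k=\int_0^1\frac{1-t^k}{1-t}\,dt$ and interchange the (finite) sum with the integral, which gives
$$\sum_{k=0}^{n}(-1)^k\binom{n}{k}H_k=\int_0^1\frac{1}{1-t}\left(\sum_{k=0}^{n}(-1)^k\binom{n}{k}-\sum_{k=0}^{n}(-1)^k\binom{n}{k}t^k\right)dt=-\int_0^1(1-t)^{n-1}\,dt=-\frac1n.$$
Hence $\Delta^n f(0)=(-1)^n(-1/n)=(-1)^{n-1}/n$, and assembling the series yields the claim. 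The same identity can also be reached by induction on $n$ or by Abel summation, but the integral route is the shortest.

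A second, more self-contained derivation would bypass the finite differences entirely and manipulate $\psi(x+1)$ directly. Starting from $\psi(x+1)=-\gamma+\int_0^1\frac{1-t^x}{1-t}\,dt$, I would write $t^x=(1-(1-t))^x=\sum_{k\ge0}\binom{x}{k}(-1)^k(1-t)^k$ for $0<t<1$, so that $\frac{1-t^x}{1-t}=-\sum_{k\ge1}\binom{x}{k}(-1)^k(1-t)^{k-1}$, and then integrate term by term using $\int_0^1(1-t)^{k-1}\,dt=1/k$. This reproduces $\psi(x+1)=-\gamma-\sum_{k\ge1}\frac{(-1)^k}{k}\binom{x}{k}$ in one stroke, with the advantage of proving \emph{equality} with $\psi(x+1)$ rather than merely matching values at the integers.

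For convergence the work is already done: since $|\binom{x}{n}|/n\le|\binom{x}{n}|$ for $n\ge1$, comparison with the series $\sum_{n}|\binom{x}{n}|$ shown to converge for $x>0$ in the proof of Lemma \ref{theorem:6} gives absolute convergence (uniform on compact subsets) here as well. I anticipate that the main obstacle is not convergence but legitimacy: a Newton series may converge without summing to the function that generated its coefficients, so the finite-difference computation alone does not close the argument. I would therefore lean on the second derivation, where the only thing left to justify is the term-by-term integration as $t\to0^+$; dominated convergence together with the tail bound from Lemma \ref{theorem:6} (equivalently, a Carlson-type growth estimate on $\psi(x+1)$) secures the interchange and hence the identity.
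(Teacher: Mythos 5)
Your proposal is correct, but note that the paper itself offers no proof at all here: it simply defers to the literature (``For a proof see \cite{Stern}''), so any self-contained argument is necessarily a different route. What you supply is sound on both fronts. The finite-difference computation is right: the $-\gamma$ part is killed by $\sum_{k=0}^{n}(-1)^k\binom{n}{k}=0$, and your integral evaluation of $\sum_{k=0}^{n}(-1)^k\binom{n}{k}H_k=-\frac{1}{n}$ via $H_k=\int_0^1\frac{1-t^k}{1-t}\,dt$ is a clean proof of that classical identity. More importantly, you correctly identify the genuine logical gap in stopping there --- a convergent Newton series need not sum to the function whose differences generated it --- and your second derivation closes it: expanding $t^x=\sum_{k\ge0}\binom{x}{k}(-1)^k(1-t)^k$ inside $\psi(x+1)=-\gamma+\int_0^1\frac{1-t^x}{1-t}\,dt$ and integrating term by term proves actual equality, not mere agreement at integer nodes. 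For the interchange, the tidiest justification is Fubini--Tonelli rather than dominated convergence: since $\int_0^1\sum_{k\ge1}\left|\binom{x}{k}\right|(1-t)^{k-1}\,dt=\sum_{k\ge1}\frac{1}{k}\left|\binom{x}{k}\right|<\infty$ for $x>0$ by the Gau{\ss}-test argument in Lemma \ref{theorem:6}, the swap is immediate, and this simultaneously gives the convergence claim by comparison, exactly as you say. The one caveat worth recording is the domain: your argument establishes the identity for $x>0$ (plus trivially at $x=0$), which matches the convergence range implicit elsewhere in the paper; the theorem as stated is silent on this. In short, your proof is more informative than the paper's citation, and the second derivation is the right one to lead with.
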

\begin{proof}
For a proof see \cite{Stern}.
\end{proof}

\begin{theorem}\label{theorem:5}
The function 

\begin{equation}\label{eq:54}\Lambda(x)=\prod_{n=1}^{\infty}\prod_{k=1}^{n}k^{\frac{\left(-1\right)^{n+k}}{\left(n-k\right)!\left(k+n-1\right)!}\frac{\left(2k-1\right)}{\left(2n-1\right)}\frac{\Gamma\left(x+n\right)}{\Gamma\left(x+1-n\right)}}
\end{equation}

interpolates the factorial at the positive integers, interpolates the reciprocal factorial at the negative integers, and converges for the entire real axis. \footnote{Note that as, for integer $n$, $\frac{\Gamma\left(x+n\right)}{\Gamma\left(x+1-n\right)}=\left(x-n+1\right)...\left(x+n-1\right)$ the logarithm of equation \ref{eq:54} is also a Newton Series.}

\end{theorem}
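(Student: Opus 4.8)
The plan is to pass to logarithms and read off the structure. Writing $P_n(x) = \frac{\Gamma(x+n)}{\Gamma(x+1-n)} = \prod_{j=-(n-1)}^{n-1}(x+j)$ and
\[ c_n = \sum_{k=1}^n \frac{(-1)^{n+k}}{(n-k)!(k+n-1)!}\,\frac{2k-1}{2n-1}\,\ln k, \]
the definition becomes $\ln\Lambda(x) = \sum_{n=1}^\infty c_n\, P_n(x)$, a Newton-type series in the symmetric polynomial basis $\{P_n\}$. The decisive structural facts are that each $P_n$ has degree $2n-1$ and vanishes at every integer in $[-(n-1),\,n-1]$, and that $P_n(-x) = -P_n(x)$, so $\ln\Lambda$ is (formally) an odd function.

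First I would treat the positive integers. Because $P_n(m) = 0$ for $n > m$, the series $\ln\Lambda(m)$ collapses to the finite sum $\sum_{n=1}^m c_n P_n(m)$ with $P_n(m) = (m+n-1)!/(m-n)!$. After interchanging the two finite sums, the claim $\Lambda(m) = m!$ reduces to showing that the coefficient of each $\ln k$ equals $1$, i.e.
\[ \sum_{n=k}^m \frac{(-1)^{n+k}(m+n-1)!}{(m-n)!\,(n-k)!\,(k+n-1)!}\,\frac{2k-1}{2n-1} = 1, \qquad 1 \le k \le m. \]
I would prove this by computing the term ratio and recognizing the sum as a terminating, balanced (Saalsch\"utzian) ${}_3F_2$ at unit argument, with upper parameters $m+k,\ k-m,\ k-\tfrac12$ and lower parameters $2k,\ k+\tfrac12$ (the lower sum exceeding the upper sum by exactly $1$). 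Pfaff--Saalsch\"utz then evaluates the sum in closed form, and the resulting products of Pochhammer symbols cancel to leave exactly $1$. This is the clean heart of the proof. The negative integers come for free: since every $P_n$ is odd, $\ln\Lambda(-x) = -\ln\Lambda(x)$, whence $\Lambda(-x)\Lambda(x) = 1$ wherever the series converges, so $\Lambda(-m) = 1/\Lambda(m) = 1/m!$. Consistency at $x=0$ is immediate, since every $P_n(0) = 0$ forces $\Lambda(0) = 1 = 0!$.

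The main obstacle is convergence on all of $\mathbb{R}$, where the two factors pull against each other: for fixed $x$ and large $n$ one has $|P_n(x)| = |\Gamma(x+n)/\Gamma(x+1-n)| \sim \frac{|\sin\pi x|}{\pi}\,((n-1)!)^2$, a squared-factorial growth, while $c_n$ must decay fast enough to compensate. To control $c_n$ I would imitate the integral method already used for $\ln\Gamma$ earlier in the paper: insert $\ln k = \int_0^\infty \frac{e^{-t}-e^{-kt}}{t}\,dt$, sum the resulting finite combination of exponentials in closed form, and bound the remaining integral. The constant part drops out because $\sum_{k=1}^n (-1)^k(2k-1)/[(n-k)!(n+k-1)!] = 0$ for $n\ge 2$, itself a Saalsch\"utz-type identity that follows from the telescoping $\frac{2k-1}{(n-k)!(n+k-1)!} = \frac{1}{(n-k)!(n+k-2)!} - \frac{1}{(n-k-1)!(n+k-1)!}$.

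The hard part will be the estimate itself. A naive absolute-value bound on $c_n$ only yields general terms of size $\sim (\ln n)/n$, which is not summable, so absolute convergence is genuinely borderline and fails by a logarithmic margin. One must therefore exploit the sign cancellation carried by the alternating factor $(-1)^{k}$ in $c_n$ (equivalently, the oscillation of $S_n(t)=\sum_k(-1)^k\frac{2k-1}{(n-k)!(n+k-1)!}e^{-kt}$) to gain the missing power of $n$, or else argue conditional convergence of $\sum_n c_n P_n(x)$ directly via a Dirichlet/Abel-type test, using that $P_{n+1}(x) = (x^2-n^2)P_n(x)$ makes the polynomials $P_n(x)$ alternate in sign once $n>|x|$. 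Securing this sharp coefficient bound is the step I expect to dominate the technical work.
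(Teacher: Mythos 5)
Your treatment of the two interpolation claims is correct and takes a genuinely different route from the paper. The paper proves $\Lambda(N)=N!$ by induction on $N$, and the heart of its inductive step is the evaluation of the inner sum in equation (\ref{eq:31}), which it delegates to Mathematica. You instead collapse the series at $x=m$ directly (since $P_n(m)=0$ for $n>m$), interchange the two finite sums, and identify the resulting coefficient identity as a terminating Saalsch\"utzian ${}_3F_2$. Your parameters check out: substituting $n=k+j$ gives the term ratio $\frac{(j+k+m)\,(j+k-m)\,\left(j+k-\frac12\right)}{(j+1)\,(j+2k)\,\left(j+k+\frac12\right)}$, the series terminates at $j=m-k$, and the balance condition $(2k)+\left(k+\frac12\right)-(k+m)-(k-m)-\left(k-\frac12\right)=1$ holds, so Pfaff--Saalsch\"utz applies and indeed yields $1$ (small cases confirm it, e.g.\ $m=3$, $k=1$ gives $3-4+2=1$). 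Replacing the paper's computer-algebra step and induction with a citable classical identity is an improvement in rigor. Your argument for the negative integers, via oddness of $P_n(x)=x\prod_{k=1}^{n-1}\left(x^{2}-k^{2}\right)$ and the reflection $\Lambda(x)\Lambda(-x)=1$, coincides with the paper's. (One small loose end: your telescoping for $\sum_{k=1}^{n}(-1)^{k}(2k-1)/[(n-k)!(n+k-1)!]=0$ needs an Abel summation plus the standard partial alternating binomial sum $\sum_{j=0}^{n-1}(-1)^{j}\binom{2n-2}{j}=(-1)^{n-1}\binom{2n-3}{n-1}$ to close; the identity itself is true.)

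The genuine gap is convergence, a full third of the theorem's assertion, which you explicitly leave open: you propose two strategies (cancellation in $S_n(t)$, or a Dirichlet/Abel test using $P_{n+1}(x)=(x^{2}-n^{2})P_n(x)$) but carry out neither, and the coefficient estimate gaining roughly a factor of $n$ over the triangle inequality is exactly the hard point, so as written you have proved the interpolation statements but not convergence on $\mathbb{R}$. It is worth noting, however, that your size analysis is accurate and exposes a real defect in the paper's own proof: for fixed non-integer $x$, reflection gives $\left|\Gamma(x+n)/\Gamma(x+1-n)\right|=\frac{\left|\sin \pi x\right|}{\pi}\,\Gamma(n+x)\Gamma(n-x)\sim\left((n-1)!\right)^{2}$, whereas the paper invokes the DLMF asymptotic $\Gamma(z+a)/\Gamma(z+b)\sim z^{a-b}$ --- valid as $z\to\infty$ with $a,b$ fixed --- in the opposite regime ($x$ fixed, $a=n\to\infty$, $b=1-n$), thereby replacing a factor of true size $\left((n-1)!\right)^{2}$ by $x^{2n-1}$ before applying the ratio test. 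Your observation that the absolute values of the log-series terms are then only $O\left((\ln n)/n\right)$ shows the paper's ratio-test conclusion does not follow as written, and that any correct proof of convergence must exploit the oscillation in $k$, as you propose. Until you complete that estimate (Euler--Maclaurin or Abel summation on $\sum_{k}(-1)^{k}(2k-1)\ln k/[(n-k)!(n+k-1)!]$, whose smooth unimodal envelope should make the alternating sum smaller than its peak term by the needed power of $n$ up to logarithms), your proposal establishes strictly less than the theorem claims --- though what it does establish, it establishes more soundly than the paper.
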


\break
\begin{proof}
    
For a positive integer N, we need to show that 

$$
N!=\prod_{n=1}^{N}\prod_{k=1}^{n}k^{\frac{\left(-1\right)^{n+k}\left(2k-1\right)}{\left(n-k\right)!\left(k+n-1\right)!}\frac{\left(N+n-1\right)!}{\left(N-n\right)!\left(2n-1\right)}}
$$

where the product is finite as we take $\frac{1}{\left(N-n\right)!}$ to be 0 for $N>n$. To show the equation holds, we use induction. For the base case $N=1$ we have 
$$1=\prod_{n=1}^{1}\prod_{k=1}^{n}k^{\frac{\left(-1\right)^{n+k}\left(2k-1\right)}{\left(n-k\right)!\left(k+n-1\right)!}\frac{n!}{\left(1-n\right)!\left(2n-1\right)}}$$ which holds. For $N+1$ we have

\begin{equation}\label{eq:29}
\prod_{n=1}^{N+1}\prod_{k=1}^{n}k^{\frac{\left(-1\right)^{n+k}\left(2k-1\right)}{\left(n-k\right)!\left(k+n-1\right)!}\frac{\left(N+n\right)!}{\left(N+1-n\right)!\left(2n-1\right)}}=\left(N+1\right)\prod_{n=1}^{N}\prod_{k=1}^{n}k^{\frac{\left(-1\right)^{n+k}\left(2k-1\right)}{\left(n-k\right)!\left(k+n-1\right)!}\frac{\left(N+n-1\right)!}{\left(N-n\right)!\left(2n-1\right)}},
\end{equation}

where the equality has been established using the factorial identity $(N+1)!=(N+1)N!$. Dividing both sides of equation \ref{eq:29} by the first N terms of the product on the left side and taking natural logarithms gives

\begin{equation}\label{eq:30}
\begin{split}
\sum_{k=1}^{N+1}\frac{\left(-1\right)^{N+1+k}\left(2k-1\right)\left(2N\right)!\ln\left(k\right)}{\left(N+1-k\right)!\left(k+N\right)!}=\ln\left(N+1\right)\\-\sum_{n=1}^{N}\sum_{k=1}^{n}\frac{\left(-1\right)^{n+k}\left(2k-1\right)}{\left(n-k\right)!\left(k+n-1\right)!}\frac{\left(N+n-1\right)!\ln\left(k\right)}{\left(N+1-n\right)!}.
\end{split}
\end{equation}

Taking $\binom{n}{k}=0$ for $k>n$ we can change the upper bound of the \emph{k} summation on the right of equation \ref{eq:30} to \emph{N} without changing the value of the sum. This gives the right side of the equation as
$$
    \ln\left(N+1\right)+\sum_{k=1}^{N}\left(-1\right)^{1+k}\left(2k-1\right)\ln\left(k\right)\sum_{n=1}^{N}\frac{\left(-1\right)^{n}}{\left(n-k\right)!\left(k+n-1\right)!}\frac{\left(N+n-1\right)!}{\left(N+1-n\right)!}.
$$

Maple can evaluate the inner sum: 
\begin{equation}\label{eq:31}
    \sum_{n=1}^{N}\frac{\left(-1\right)^{n}}{\left(n-k\right)!\left(k+n-1\right)!}\frac{\left(N+n-1\right)!}{\left(N+1-n\right)!}\end{equation}$$=\frac{\sin\left(\pi\left(k+1\right)\right)\left(N+k\right)!\left(N+1-k\right)!+\pi\left(-1\right)^{N}\left(2N\right)!\left(N+k\right)\left(N+1-k\right)}{\left(N+k\right)!\left(N+1-k\right)!\pi\left(N+k\right)\left(N+1-k\right)}.
 $$
 
But for integer \emph{k}, $\sin\left(\pi\left(k+1\right)\right)=0$. Disregarding this term equation \ref{eq:31} equals $\frac{\left(-1\right)^{N}\left(2N\right)!}{\left(N+k\right)!\left(N+1-k\right)!}$.
Substituting this into equation \ref{eq:30} and simplifying we see that the inductive step holds. And so for positive integers N the $\Lambda$ function equals the factorial. But since 
$$
    \frac{\Gamma\left(x+n\right)}{\Gamma\left(x+1-n\right)}=\left(x-n+1\right)...\left(x+n-1\right)=x\prod_{k=1}^{n-1}\left(x^{2}-k^{2}\right)
$$

$\Lambda$ satisfies the reflection formula $\Lambda(x)\Lambda(-x)=1$. Hence at the negative integers $\Lambda$ equals the reciprocal factorial.

\begin{figure}
    \centering
    \fbox{\includegraphics[width=5.5cm]{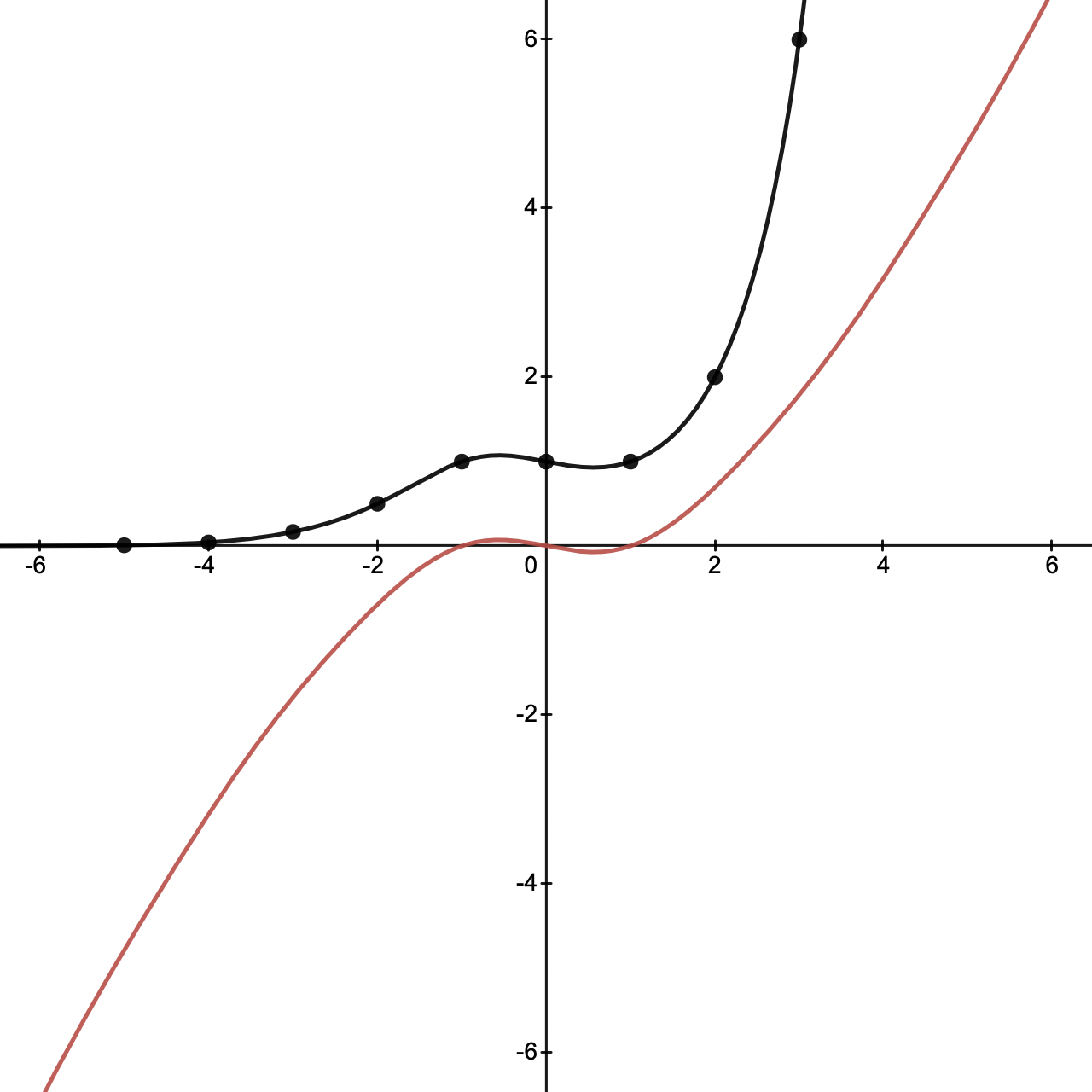}}
    \caption{The graph shows the $\Lambda$ function (black) with the dots representing the values of the factorial at the positive integers and the reciprocal factorial at the negative integers, as well as the log-$\Lambda$ function (red). The image suggests that perhaps the $\Lambda$ function is log-convex for $x>0$ and log-concave for $x<0$. }
    
\end{figure}

The convergence of equation \ref{eq:54} will now be demonstrated. Firstly, taking logarithms gives

\begin{equation}\label{eq:32}
    \ln \Lambda(x)=\sum_{n=1}^{\infty}\frac{\Gamma(x+n)\left(-1\right)^{n}}{\Gamma(x+1-n)\left(2n-1\right)}\sum_{k=1}^{n}\frac{\left(-1\right)^{k}\left(2k-1\right)}{\left(n-k\right)!\left(k+n-1\right)!}\ln\left(k\right).
\end{equation}

Since $\frac{\Gamma\left(z+a\right)}{\Gamma\left(z+b\right)}\sim z^{a-b}$ (if $z\to\infty$ in the sector $|\text{ph}\:z|\le \pi-\delta$ ) \cite{Asymptotic}, we have, for the absolute value of the summands in \ref{eq:32}:

$$
\left|\frac{\Gamma(x+n)\left(-1\right)^{n}}{\Gamma(x+1-n)\left(2n-1\right)}\sum_{k=1}^{n}\frac{\left(-1\right)^{k}\left(2k-1\right)\ln\left(k\right)}{\left(n-k\right)!\left(k+n-1\right)!}\right|\sim\left|\frac{x^{2n-1}}{2n-1}\sum_{k=1}^{n}\frac{\left(-1\right)^{k}\left(2k-1\right)\ln\left(k\right)}{\left(n-k\right)!\left(k+n-1\right)!}\right|
$$

$$
\le\left|\frac{x^{2n-1}}{2n-1}\sum_{k=1}^{n}\frac{2k^2}{\left(n-k\right)!\left(k-1\right)!}\right|=\left|\frac{x^{2n-1}}{\left(2n-1\right)}\frac{2^{n-2}n\left(n+3\right)}{\left(n-1\right)!}\right|.
$$

And so the $\Lambda$ function converges if $\sum_{n=1}^{\infty}\left|\frac{x^{2n-1}}{\left(2n-1\right)}\frac{2^{n-2}n\left(n+3\right)}{\left(n-1\right)!}\right|$ does. But since 
\begin{equation}\label{eq:1073}
\lim_{n\to\infty}\frac{2\left(n+1\right)\left(n+4\right)\left(2n-1\right)x^{2}}{n^{2}\left(n+3\right)\left(2n+1\right)}=0
\end{equation}
the series converges by the ratio test. Hence the $\Lambda$ function converges for the entire real axis. 
\end{proof}

\section{Inverse Gamma Function}
In this section, the series definition of the inverse gamma function given in conjecture \ref{conjecture} will be motivated. In particular, a Newton series will be constructed for this function, which we denote $\text{inv}\Gamma_0(x)$. For this purpose, it would be convenient to choose a series of the form 
\begin{equation}\label{eq:1074}
a_{1}+a_{2}\left(x-1!\right)+a_{3}\left(x-1!\right)\left(x-2!\right)+a_{4}\left(x-1!\right)\left(x-2!\right)\left(x-3!\right)+...
\end{equation}
i.e. selecting the nodes to be the points (1, 2), (2, 3), (6, 4), (24, 5),  etc. as the value of $\text{inv}\Gamma_0$ at these points is straightforward to compute. For comparison, it would be tricky to build a typical Newton series such that it interpolates $\text{inv}\Gamma_0(x)$ at integer values of $x$, as for example $\text{inv}\Gamma_0(3)=3.405869986...$ and $\text{inv}\Gamma_0(4)=3.664032797...$ have no known closed forms. 

With this in mind, we can begin computing the coefficients $a_n$ in equation \ref{eq:1074}: we first solve $x=1=\text{inv}\Gamma_0(1)$ to get $a_1=2$. Then we set $x=2!$ to get $a_{2}=1$, and $x=3!$ to get $a_3=-\frac{3}{20}$, and set $x=4!$ to get $a_4=\frac{559}{91080}$ etc. However, the resulting series diverges: as in the example from the introduction, the factorials grow too fast to be suitable as nodes for interpolation.

However, if computing the Newton series for the function $\ln\Gamma(y)=x$ using a series of the form 

$$a_{1}+a_{2}\left(x-\ln\left(1!\right)\right)+a_{3}\left(x-\ln\left(1!\right)\right)\left(x-\ln\left(2!\right)\right)+...$$
yields a series that grows at a slower rate. Setting $x=\ln(x)$ obtains a series for $\text{inv}\Gamma_0(x)$.

Performing the calculation manually yields the coefficients $a_{1}=2$, $a_{2}=\frac{1}{\ln\left(2\right)}$, $a_{3}=\frac{2}{\ln\left(6\right)\ln\left(3\right)}-\frac{1}{\ln\left(2\right)\ln\left(3\right)}$ etc.

Numerical tests indicate that the following formula

$$\sum_{k=0}^{n}k\left(\left(\prod_{i=1}^{k}\ln\left(\frac{\left(k+1\right)!}{i!}\right)\right)\prod_{i=1}^{n-k}\ln\left(\frac{\left(k+1\right)!}{\left(k+1+i\right)!}\right)\right)^{-1}$$
computes these coefficients. With this in mind, we propose the following:

\begin{conjecture}\label{conjecture}
The principal branch of the inverse gamma function $\Gamma(y)=x$ has the following Newton series representation

$$\text{inv}\Gamma_0(x)=2+\sum_{n=0}^{\infty}\sum_{k=0}^{n}k\left(\left(\prod_{i=1}^{k}\ln\left(\frac{\left(k+1\right)!}{i!}\right)\right)\prod_{i=1}^{n-k}\ln\left(\frac{\left(k+1\right)!}{\left(k+1+i\right)!}\right)\right)^{-1}\prod_{i=1}^{n}\ln\left(\frac{x}{i!}\right),$$

 which converges on the interval $(\Gamma(\alpha), \infty)$ where $\alpha=1.4616...$ is the unique positive number such that $\psi(\alpha)=0$.
\end{conjecture}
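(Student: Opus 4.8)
The plan is to recognize the conjectured expression as the Newton interpolation series of the auxiliary function $g := \tilde{\Gamma}\circ\exp$, i.e. the function determined by $\Gamma(g(u))=e^{u}$ on the increasing branch, sampled at the nodes $u_{j}=\ln(j!)$, where $g(u_{j})=j+1$. First I would verify that the bracketed reciprocal products in the conjecture are exactly the factors $\prod_{i\ne j}(u_{j}-u_{i})$ occurring in the divided-difference formula $[g;u_{1},\dots,u_{m+1}]=\sum_{j}g(u_{j})\big/\prod_{i\ne j}(u_{j}-u_{i})$ (the first product collecting the differences to earlier nodes, the second to later nodes), and that the substitution $u=\ln x$ turns $\prod_{i=1}^{n}(u-u_{i})$ into $\prod_{i=1}^{n}\ln(x/i!)$. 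The one apparent discrepancy is that the conjecture carries $k$ in the numerator where the sampled value $g(u_{k+1})=k+2$ should appear; this is legitimate because a divided difference of order $\ge 1$ annihilates constants, so subtracting the constant $2$ (the already-extracted leading term $g(u_{1})$) from every value leaves all higher coefficients unchanged. This reduces the conjecture to the statement that the Newton series of $g$ at the nodes $\ln(j!)$ converges to $g$ on $(\ln\Gamma(\alpha),\infty)$, which becomes $(\Gamma(\alpha),\infty)$ after undoing $u=\ln x$.

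Next I would supply the analytic input about $g$. Since $\Gamma$ is log-convex on $(0,\infty)$ with $\Gamma'=\Gamma\psi$ vanishing only at $\alpha$, the map $y\mapsto\ln\Gamma(y)$ is a strictly increasing bijection from $(\alpha,\infty)$ onto $(\ln\Gamma(\alpha),\infty)$, and by the inverse function theorem $g$ extends analytically to a complex neighborhood of this ray that contains every node $u_{j}=\ln(j!)\ge 0$. The nearest obstruction to the ray is the branch point at $u=\ln\Gamma(\alpha)$, where $\psi(\alpha)=0$ forces a square-root-type ramification of the inverse; this is what should dictate the left endpoint $\Gamma(\alpha)$ of the convergence interval. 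I would also record growth: inverting $\ln\Gamma(y)\sim y\ln y$ gives $g(u)=O(u/\ln u)$, so $g$ grows sublinearly along the ray and in the surrounding sector.

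With these in hand I would control convergence through the Hermite contour form of the interpolation remainder,
\[
g(u)-\sum_{n=0}^{N}[g;u_{1},\dots,u_{n+1}]\,\omega_{n}(u)=\frac{\omega_{N+1}(u)}{2\pi i}\oint_{C}\frac{g(z)}{\omega_{N+1}(z)\,(z-u)}\,dz,\qquad \omega_{n}(u)=\prod_{j=1}^{n}(u-u_{j}),
\]
with $C$ a contour enclosing the nodes and $u$ while remaining inside the analyticity domain of $g$. Because $\log|\omega_{N+1}(w)|=\sum_{j=1}^{N+1}\log|w-\ln(j!)|$ is a logarithmic potential of the sparse sequence $\ln(j!)\sim j\ln j$, the decay of the remainder is governed by comparing this potential at $u$ with its infimum on $C$. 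Showing that $\omega_{N+1}(u)/\omega_{N+1}(z)$ decays geometrically for $u$ to the right of the branch point, and that the sublinear growth of $g$ together with the shrinking node density keeps the contour integral bounded, would give convergence and simultaneously identify the limit as $g$, so no separate Carlson-type uniqueness argument is needed. Undoing $u=\ln x$ then yields the conjecture on $(\Gamma(\alpha),\infty)$, and the fact that $g$ maps into $(\alpha,\infty)$ identifies the principal branch.

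The hard part will be the potential-theoretic estimate in this last step. Unlike the earlier theorems, where the nodes are the integers and the coefficients $\binom{x}{n}$ admit the clean Gauss test and the Hermite/Laguerre integral bounds, here the nodes $\ln(j!)$ spread superlinearly, so their normalized counting measure is highly non-uniform and the Newton basis products $\omega_{n}$ have no elementary asymptotic. Pinning the convergence boundary exactly at $\ln\Gamma(\alpha)$ — rather than at some node-dependent abscissa — requires a sharp saddle-point or potential analysis of $\sum_{j}\log|w-\ln(j!)|$ matched against a growth bound for $g$ along a carefully chosen contour. This delicate balance, together with the absence of any integral representation for the divided differences $[g;u_{1},\dots,u_{n+1}]$ analogous to Hermite's integral, is precisely what keeps the statement at the level of a conjecture.
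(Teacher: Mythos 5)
The paper offers no proof of this statement at all: it is presented as a conjecture, supported only by the hand computation of $a_{1}=2$, $a_{2}=1/\ln 2$, $a_{3}=\frac{2}{\ln 6\,\ln 3}-\frac{1}{\ln 2\,\ln 3}$ and the remark that ``numerical tests indicate'' the displayed coefficient formula; the convergence claim on $(\Gamma(\alpha),\infty)$ is not argued anywhere. Your algebraic reduction is correct and in fact goes beyond the paper. With $u_{j}=\ln(j!)$ and $g=\tilde{\Gamma}\circ\exp$, so that $g(u_{j})=j+1$, the two bracketed products are exactly $\prod_{i\neq k+1}(u_{k+1}-u_{i})$ over $i=1,\dots,n+1$, and your observation that subtracting the extracted constant $2=g(u_{1})$ replaces the sampled values $g(u_{k+1})=k+2$ by the numerator $k$ without changing any divided difference of order at least $1$ is exactly right; one checks that the resulting divided differences reproduce the paper's $a_{2}$ and $a_{3}$. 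So you have rigorously identified the series as the Newton series of $g$ at the nodes $\ln(j!)$, something the paper only verified numerically.

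What you have not done --- as you candidly admit --- is prove convergence, and that is the entire content of the conjecture. Two concrete obstacles remain in your outline. First, in the Hermite remainder formula the contour must enclose all of $u_{1},\dots,u_{N+1}$, and since $u_{j}\sim j\ln j\to\infty$ the contour must expand with $N$ while threading between the node set and the branch cut emanating from $\ln\Gamma(\alpha)$; uniform geometric decay of $\omega_{N+1}(u)/\omega_{N+1}(z)$, with $\omega_{n}(u)=\prod_{j=1}^{n}(u-u_{j})$, on such expanding contours is not automatic and is nowhere established. Second, the nearest-singularity heuristic does not by itself pin the endpoint at $\Gamma(\alpha)$: for nonuniformly spaced nodes the convergence region of a Newton series is a level region of the logarithmic potential of the node distribution, which can lie strictly inside the domain of analyticity --- the paper's own introduction recalls Stirling's series with factorial nodes, which diverges off the nodes even though the interpolated function is analytic there, and your nodes $\ln(j!)$ still grow superlinearly. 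It is therefore conceivable that the true convergence boundary is a potential-theoretic abscissa different from $\ln\Gamma(\alpha)$. Your proposal is a sensible program, and on the combinatorial side it is strictly stronger than what the paper provides, but the analytic half remains open exactly where you left it, so the statement stays a conjecture.
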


\appendix

\section{Appendix A}

\begin{lstlisting}[caption={Complex 3D plot of the $\Lambda$ function, made using Mathematica \cite{Wolfram}}
                \label{list:ajrec}]
  ComplexPlot3D[
  Product[
  Product[k^(((-1)^(n+k)(z+n-1)!(2k-1))/((n-k)!(n+k-1)!
  (z-n)!(2n-1))), {k,  1, n}], {n, 1, 10}]
  , {z, -4.5 - 2.5 I, 4.5 + 2.5 I}, Filling -> Bottom, PlotRange 
  -> {0, 6}, PlotPoints -> 50, Exclusions 
  -> None, Mesh -> {Join[10^Range[-4, 0, 0.5], Range[0, 6, 0.5]], 
  Range[-Pi, Pi, Pi/6]}, ViewPoint 
  -> {-.8, -3, 1}, Boxed -> False, AxesOrigin 
  -> {0, 0, 0}, AxesLabel -> {"x", "y", "z"}]
\end{lstlisting}

\vspace{6pt}

\end{document}